\documentclass[a4paper]{article}
    \usepackage{empheq} 
    \usepackage{bm} 
    \usepackage{mathtools} 
    \mathtoolsset{showonlyrefs=true} 
    \usepackage{graphicx} 
    \usepackage[hang,small,bf]{caption} 
    \usepackage[subrefformat=parens]{subcaption}
    \usepackage{pgfplots} 
    \usepackage{booktabs} 
    \usepackage{pgfplotstable}
    \pgfplotsset{filter discard warning=false}
    \usetikzlibrary{patterns}
    \usepackage{hyperref} 
    \usepackage{listings} 
    \usepackage{verbatim} 
    \usepackage{algorithm}
    \usepackage[noend]{algpseudocode} 

    \newcommand{\stt}{\State}
    \newcommand{\In}{\Require}
    \newcommand{\Out}{\Ensure}
    \usepackage{physics}
    \usepackage[hmargin=2cm, vmargin=2.25cm]{geometry}
    \usepackage{setspace} 
    \setstretch{1.15}

    \usepackage[T1]{fontenc}
    \usepackage[bitstream-charter]{mathdesign}
    \let\mathcal\mathscr
    \let\mathbb\undefined
    \usepackage[bb=boondox, scr=pxtx]{mathalfa}

    \usepackage{amsthm}
    \theoremstyle{definition}
    \newtheorem{Def}{Definition}[section]
    \newtheorem{Prop}{Proposition}[section]

    \definecolor{gray1}{HTML}{555555}
    \definecolor{white1}{HTML}{000000}
    \definecolor{black1}{HTML}{000000}

    \pgfplotscreateplotcyclelist{srkr}{
    {gray1},
    {black1},
    {white1}
    }

        \newcommand{\bbN}{\in \mathbb{N}}

        \newcommand{\bbC}{\in \mathbb{C}}
        
        \newcommand{\bbCnn}{\in \mathbb{C}^{n \times n}}

        \newcommand{\bCnn}{\mathbb{C}^{n\times n}}

        \newcommand{\proot}[2]{#1^{1/ #2}}

        
        
        \newcommand{\floor}[1]{\lfloor #1 \rfloor}
        
        
        \newcommand{\zerom}{O}
        
        \newcommand{\Order}{\mathcal{O}}


    \newcommand{\citerin}{\cite[Definition 4.17]{higham2008functions}}
    \newcommand{\citerinn}{\cite[Theorem 7.2]{higham2008functions}}
    \usepackage{authblk}
    \title{A cost-efficient variant of the incremental Newton iteration for the matrix $p$th root}
    \author{Fuminori Tatsuoka\thanks{f-tatsuoka@na.nuap.nagoya-u.ac.jp},\ \ Tomohiro Sogabe,\ Yuto Miyatake,\ Shao-Liang Zhang}
    \affil{\small \textit{Department of Computational Science and Engineering, Graduate School of Engineering, Nagoya University, Japan}}
    \providecommand{\keywords}[1]{\textbf{Keywords}  #1}
    \providecommand{\classificasion}[1]{\textbf{MR(2010) Subject Classification}  #1}
    \date{}

\begin{document}
    \maketitle
\vspace{-2.2em}
    \begin{abstract}
        Incremental Newton (IN) iteration, proposed by Iannazzo, is stable for computing the matrix $p$th root, and its computational cost is $\Order (n^3p)$ flops per iteration.
        In this paper, a cost-efficient variant of IN iteration is presented.
        The computational cost of the variant well agrees with $\Order (n^3 \log p)$ flops per iteration, if $p$ is up to at least 100.
    \end{abstract}
    \keywords{matrix $p$th root; matrix polynomial.}\\
    \classificasion{65F30; 65F60; 65H04}
    \section{Introduction}

        A matrix $p$th root ($p\bbN$) of $A\bbCnn$ is defined as a solution of the following matrix equation:
        $$X^p=A.$$
        While this matrix equation might have infinitely many solutions, the target of this paper is a solution whose eigenvalues lie in the set $\qty{z\bbC\setminus\qty{0}:-\pi/p<\arg z<\pi/p}$.
        If $A$ has no nonpositive real eigenvalues, the target solution is unique \citerinn\ and is referred to as the principal matrix $p$th root of $A$, denoted by the symbol $A^{1/p}$.
        Throughout this paper, $A$ is assumed to have no nonpositive real eigenvalues.
        The principal matrix $p$th root arises
        in lattice quantum chromodynamics (QCD) calculations \cite{clark2007accelerating}
        and in the computation of the matrix logarithm \cite{higham2008functions}
        that corresponds to the inverse function of the matrix exponential.
        Therefore, numerical algorithms for computing the principal matrix $p$th root have been developed during the past decade.

        Numerical algorithms for the principal matrix $p$th root can be classified roughly into direct methods and iterative methods.
        Direct methods include, for example, the Schur method \cite{smith2003schur},
        the matrix sign method \cite{bini2005algorithms},
        and a method based on repeated eigenvalues of $A$  \cite{sadeghi2011computing}.
        The Schur method can be performed in $\Order (n^3p)$ flops,
        the matrix sign method can be performed in at least $\Order (n^3p\log p)$ flops,
        and the computational cost of the method based on repeated eigenvalues is not explicitly stated in \cite{sadeghi2011computing}.
        Therefore, in terms of computational cost, the Schur method is likely the method of choice for large-scale problems.
        Iterative methods include Newton's method and Halley's method for $A^{1/p}$,
        proposed by Iannazzo \cite{iannazzo2006newton,iannazzo2008family},
        and Newton's method for $A^{-1/p}$, proposed by Guo \cite{guo2006schur}.
        In this paper, we consider Newton's method for $A^{1/p}$, since that method is the most fundamental iterative method.
        In addition, it has been reported that Newton's method for $A^{1/p}$ gives a more accurate solution than the Schur method for some ill-conditioned matrices \cite{iannazzo2006newton}.

        Now, let us recall several results for Newton's method by Iannazzo \cite{iannazzo2006newton}.
        It is known that Newton's method for a matrix $p$th root can be written as
        \begin{align}\label{eq:ItrSN}
           X_{k+1} = \cfrac{(p-1)X_k+AX_k^{1-p}}{p}, \quad k=0,1,2,\dots,
        \end{align}
        with an initial guess $X_0$ satisfying $AX_0=X_0A$.
        However, it is not always guaranteed that this method converges to the principal $p$th root.
        Iannazzo showed that if both of the following conditions,
        \begin{empheq}[left=\empheqbiglbrace]{align}
            & \mathrm{all\ eigenvalues\ of\ }A
              \mathrm{\ lie\ in\ the\ set\ }\qty{z\bbC:\Re z >0,|z|\le 1}, \label{eq:condition1}\\
            & X_0=I, \label{eq:condition2}
        \end{empheq}
        are satisfied, then Newton's method \eqref{eq:ItrSN} converges to $A^{1/p}$.
        Next, Iannazzo proposed a preconditioning step,
        computing $\tilde{A}=A^{1/2}/\norm{A^{1/2}}$ with a consistent norm (say, $p$-norm, Frobenius norm),
        because then $\tilde{A}$ satisfies the condition \eqref{eq:condition1} for any $A$.
        Even if the matrix $A$ is preconditioned, Newton's iteration \eqref{eq:ItrSN} could be unstable in the neighborhood of $A^{1/p}$ \cite{smith2003schur}.
        Then, Iannazzo proposed three stable iterations:
        \begin{align} \label{eq:ItrIN}
           \begin{cases}
               X_{k+1} = X_k + H_k,\ F_k = X_kX_{k+1}^{-1},\\
               H_{k+1} = -\frac{1}{p}H_k \qty(
               \sum\limits_{i=0}^{p-2}(i+1)X_{k+1}^{-1}F_k^i
               )H_k,
           \end{cases}
           \quad\qty(X_0 = I,\ H_0 = \frac{A-I}{p})
        \end{align}
        \begin{align}\label{eq:Itr3.9}
           \begin{cases}
               X_{k+1} = X_k + H_k,\ F_k = X_kX_{k+1}^{-1},\\
               H_{k+1} = -X_k \qty(
               \frac{I-F_k^p}{p}+F_k^{p-1}(F_k-I)
               ),
           \end{cases}
           \quad\qty(X_0 = I,\ H_0 = \frac{A-I}{p})
        \end{align}
        and
        \begin{align} \label{eq:ItrHWA}
           \begin{cases}
               X_{k+1} = X_k\qty(\cfrac{(p-1)I+N_k}{p}),\\[1.5em]
               N_{k+1} = \qty(\cfrac{(p-1)I+N_k}{p})^{-p}N_k.
           \end{cases}
           \quad\qty\Big(X_0 = I,\ N_0 = A)
        \end{align}
        In particular,
        iteration \eqref{eq:ItrIN} is called incremental Newton (IN) iteration,
        and iteration \eqref{eq:ItrHWA} is called coupled Newton iteration.

        It is known that Newton's method converges quadratically in a neighborhood of the solution,
        but global convergence of that method is not guaranteed.
        One way to globalize the convergence of Newton's method is by using damping.\footnote{
            A damped Newton iteration is represented as $X_{k+1}=X_k+\alpha_kH_k\ (\alpha_k \in (0,1])$, where $\alpha_k$ is a relaxation factor chosen to reduce residuals.
        }
        From this point of view,
        it might be possible to apply damping to IN iteration \eqref{eq:ItrIN} and iteration \eqref{eq:Itr3.9}.
        Comparing these two iterations,
        the cost of IN iteration \eqref{eq:ItrIN} is $\Order (n^3p)$ flops per iteration, higher than $\Order (n^3\log p)$ flops for iteration \eqref{eq:Itr3.9}.
        On the other hand, the incremental part of IN iteration \eqref{eq:ItrIN} is computed in the form of $H_{k+1} = f_k(H_k)$,
        in contrast to iteration \eqref{eq:Itr3.9}.
        This characteristic of IN iteration \eqref{eq:ItrIN} might provide a new viewpoint for convergence analysis to confirm that $H_k$ converges to $\zerom$.
        That is to say, if $H_{k+1}$ explicitly includes $H_k$,
        then $H_{k+1}$ is represented as
        $H_{k+1}=\qty(f_k\circ f_{k-1}\circ \dots \circ f_0)(H_0)$,
        and its convergence behavior might be analyzed using composite mapping $\qty(f_k\circ f_{k-1}\circ \dots \circ f_0)$ and initial matrix $H_0$.
        Thus, IN iteration \eqref{eq:ItrIN} is worth considering.

        The purpose of this paper is to provide a cost-efficient variant of IN iteration \eqref{eq:ItrIN} whose increment part is computed in the form $H_{k+1}=f_k(H_k)$.
        In this paper, we reduce the cost of IN iteration \eqref{eq:ItrIN}
        by finding a specific matrix polynomial in IN iteration \eqref{eq:ItrIN}
        and proposing a decomposition of the matrix polynomial.

        The remainder of this paper is organized as follows.
        In section 2, a variant of IN iteration is shown, and we numerically estimate its cost at $\Order (n^3\log p)$ flops per iteration.
        In section 3, we present the results of numerical experiments.
        We conclude in section 4.

    \section{Variant of IN iteration}
        The computational cost for computing the increment part
        \begin{align} \label{eq:HkIN}
            H_{k+1}=-\frac{1}{p}H_k\qty(\sum_{i=0}^{p-2}(i+1)X_{k+1}^{-1}F_k^i)H_k
        \end{align}
        is the highest in IN iteration \eqref{eq:ItrIN},
        because $(2p+2/3)n^3+\Order (n^2)$ flops are required for Eq. \eqref{eq:HkIN},
        and $(2p+10/3)n^3+\Order (n^2)$ flops for IN iteration \eqref{eq:ItrIN}.
        In this section, without losing the previous matrix $H_k$,
        Eq. \eqref{eq:HkIN} is rewritten to reduce the number of matrix multiplications
        whose computational costs are $\Order (n^3)$ flops.

    \subsection{Rewriting the increment}
        From the definition of IN iteration \eqref{eq:ItrIN},
        the increment $H_k$ is equivalent to $X_{k+1}-X_k$, and thus
        \begin{align}
                H_kX_{k+1}^{-1}= (X_{k+1}-X_k)X_{k+1}^{-1}= I-F_k.
        \end{align}
        Substituting this relation into Eq. \eqref{eq:HkIN} yields
        \begin{align}
            H_{k+1} & =-\frac{1}{p}H_kX_{k+1}^{-1}\qty(
                \sum_{i=0}^{p-2}(i+1)F_k^i
                )H_k\\
            & = -\frac{1}{p}\qty(I-F_k)\qty(
                \sum_{i=0}^{p-2}(i+1)F_k^i
                )H_k\\
            & = -\frac{1}{p} \qty[
                I + F_k + F_k^2 +
                \dots + F_k^{p-2} - (p-1)F_k^{p-1}
                ]H_k\\
            & = -\frac{1}{p} \qty\bigg{
                \qty\Big[-(p-1)F_k+pI]\qty\Big[
                    I+F_k + F_k^2 + \dots + F_k^{p-2}
                    ]-(p-1)I
                }H_k.
            \label{eq:CeinHk}
        \end{align}
        Introducing the matrix polynomial
        $$P_d(X):= I+X+X^2+\dots+X^d,$$
        enables Eq. \eqref{eq:HkIN} to be simplified further to
        \begin{align}\label{eq:ProHk}
            H_{k+1} = -\frac{1}{p} \qty\bigg{
                \qty\Big[-(p-1)F_k+pI]P_{p-2}(F_k)-(p-1)I
                }H_k.
        \end{align}
        The number of matrix multiplications for Eq. \eqref{eq:ProHk} is equal to
        the number of matrix multiplications for $P_{p-2}(F_k)$ plus two.
        We now define a variant of IN iteration as
        \begin{align} \label{eq:ItrProp}
            \begin{cases}
            X_{k+1} = X_k + H_k,\ F_k = X_kX_{k+1}^{-1},\\
            H_{k+1} = -\frac{1}{p} \qty\bigg{
            \qty\Big[-(p-1)F_k+pI]P_{p-2}(F_k)-(p-1)I
            }H_k.
            \end{cases}
        \end{align}
        This new expression motivates us to reduce the number of matrix multiplications for computing $P_{p-2}(F_k)$.

        Furthermore, this variant \eqref{eq:ItrProp} is as stable as original IN iteration \eqref{eq:ItrIN}.
        We use the following definition of stability to analyze the variant \eqref{eq:ItrProp}.

        \begin{Def}[\citerin]
            \label{def:Stability}
            Consider an iteration $X_{k+1}=g(X_k)$ with a fixed point $X$.
            Assume that $g$ is Fr\'{e}chet differentiable at $X$.
            The iteration is stable in a neighborhood of $X$ if the Fr\'{e}chet derivative
            $L_g(X)$ has bounded powers, that is, there exists a constant $c$ such that $\|L_g^i(X)\|\le c$ for all $i > 0$.
        \end{Def}

        In Definition \ref{def:Stability}, $L_g^i(X)$ is $i$th power of the Fr\'{e}chet derivative $L$ at $X$.
        For more details of definitions of $L_g^i(X)$, $\|L_g^i(X)\|$, and other notations used for stability analysis, see Appendix.
        Then, we show that the variant \eqref{eq:ItrProp} is stable.


    \begin{Prop}
        The variant \eqref{eq:ItrProp} is stable.
    \end{Prop}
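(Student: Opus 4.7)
The plan is to exploit the fact that the derivation of Eq.~\eqref{eq:ProHk} from Eq.~\eqref{eq:HkIN} is a purely algebraic rewriting: the identity $H_k X_{k+1}^{-1}=I-F_k$ is an identity on the iterates themselves, so for every input pair $(X_k,H_k)$ the variant \eqref{eq:ItrProp} returns exactly the same $(X_{k+1},H_{k+1})$ as the original IN iteration \eqref{eq:ItrIN}. In other words, if we regard both iterations as maps $g,\tilde g$ acting on the state $(X,H)$, then $g\equiv\tilde g$ as functions on their common domain.

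From this equivalence the proof essentially writes itself. First I would make explicit that the common fixed point is $(A^{1/p},\zerom)$, since $X_{k+1}=X_k+H_k$ forces $H_\star=\zerom$ at any fixed point, and then $F_\star=I$ and the increment formula are consistent with $X_\star=A^{1/p}$. Second, since $g=\tilde g$ as mappings, their Fr\'echet derivatives at $(A^{1/p},\zerom)$ coincide, so $L_{\tilde g}^i(A^{1/p},\zerom)=L_{g}^i(A^{1/p},\zerom)$ for every $i>0$, and the norms agree. Third, invoking the known stability of IN iteration \eqref{eq:ItrIN} established by Iannazzo (one of the three stable iterations listed in the introduction), there exists $c$ such that $\|L_{g}^i(A^{1/p},\zerom)\|\le c$ for all $i>0$; transporting this bound through the equality of Fr\'echet derivatives gives the same bound for $L_{\tilde g}^i(A^{1/p},\zerom)$, which is exactly the hypothesis of Definition \ref{def:Stability}.

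The only subtlety worth checking carefully is that the rewriting from \eqref{eq:HkIN} to \eqref{eq:ProHk} is valid on a full neighborhood of the fixed point, not merely along the iterate sequence; this is needed so that $g$ and $\tilde g$ agree as functions (and hence have equal derivatives), not just pointwise along a single orbit. Since the rewriting uses only $H_k X_{k+1}^{-1}=(X_{k+1}-X_k)X_{k+1}^{-1}=I-F_k$, which holds whenever $X_{k+1}=X_k+H_k$ is invertible, it is valid on an open neighborhood of $(A^{1/p},\zerom)$ where $X+H$ is nonsingular. This is the one place where a brief justification is required; the rest is a direct citation of Iannazzo's stability result.

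I do not expect any real obstacle beyond this domain remark. The proof is essentially a one-line observation — the two iterations are the same map written two different ways — and stability transfers automatically. Accordingly, the write-up can be kept to a few sentences rather than an independent Fr\'echet-derivative computation.
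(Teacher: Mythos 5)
Your argument is correct, but it takes a genuinely different route from the paper. The paper does not invoke Iannazzo's stability result for IN iteration at all; instead it gives a self-contained computation of the Fr\'echet derivative of the variant's iteration map $G$ at the fixed point $\smqty[A^{1/p}\\ \zerom]$. Expanding $F_\Delta=(A^{1/p}+E_X)(A^{1/p}+E_X+E_H)^{-1}=I-E_HA^{-1/p}+\Order(\norm{E_X}^2)+\Order(\norm{E_H}^2)+\Order(\norm{E_X}\norm{E_H})$ and summing the powers $F_\Delta^i$, the paper shows the lower block of $G\qty(\smqty[A^{1/p}+E_X\\ E_H])$ is $\Order(\norm{E_X}^2)+\Order(\norm{E_H}^2)$, so that $L_G\qty(\smqty[A^{1/p}\\ \zerom],\smqty[E_X\\E_H])=\smqty[I&I\\ \zerom&\zerom]\smqty[E_X\\E_H]$; since $\smqty[I&I\\ \zerom&\zerom]$ is idempotent, its powers are uniformly bounded. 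Your observation is cleaner: the passage from \eqref{eq:HkIN} to \eqref{eq:ProHk} uses only the identity $H X_{+}^{-1}=I-F$ with $X_{+}=X+H$ and $F=X X_{+}^{-1}$, and the factorization $(I-F)\sum_{i=0}^{p-2}(i+1)F^i=\qty[-(p-1)F+pI]P_{p-2}(F)-(p-1)I$, both of which hold for \emph{all} $(X,H)$ with $X+H$ nonsingular (no commutativity of $X$ and $H$ is needed), so the two iteration maps coincide on an open neighborhood of $\smqty[A^{1/p}\\ \zerom]$ and hence have equal Fr\'echet derivatives there. What the paper's route buys is self-containedness (no reliance on an external stability proof) and, as its footnote notes, a proof of stability of \eqref{eq:ItrIN} itself as a by-product; what your route buys is brevity, at the cost of leaning on Iannazzo's result as a black box. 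Both are valid; you should just make sure the external stability statement you cite is formulated for the \emph{coupled} map on the state $(X,H)$ at the fixed point $(A^{1/p},\zerom)$, which is the precise object whose Fr\'echet derivative you are transporting.
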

    \begin{proof}
        The iteration function for the variant \eqref{eq:ItrProp} is
        \begin{align}\label{eq:ItrFun}
                G\qty(\mqty[X\\H]) & = \mqty[
                    X+H\\
                    -\frac{1}{p}\qty\bigg{
                        \qty\Big[-(p-1)F+pI]P_{p-2}(F)-(p-1)I
                        }H
                    ]
                \quad \qty(F = X(X+H)^{-1}),
        \end{align}
        and the fixed point is $\smqty[A^{1/p}\\ \zerom ]$.
        In order to calculate the Fr\'{e}chet derivative of $G$ at $\smqty[A^{1/p}\\ \zerom ]$, we calculate $G\qty(\smqty[A^{1/p}\\ \zerom ])$ and $G\qty(\smqty[A^{1/p}+E_X\\ \zerom +E_H])$,
        where $\norm{E_X}$ and $\norm{E_H}$ are sufficiently small.
        Substituting $X=A^{1/p}$ and $H=\zerom$ into Eq. \eqref{eq:ItrFun},
        \begin{align}
             G\qty(\mqty[A^{1/p}\\ \zerom ])
                = \mqty[
                    A^{1/p}\\
                    -\frac{1}{p}\qty{
                        \qty\Big[-(p-1)I+pI]\qty\Big[\sum_{n=0}^{p-2}I]-(p-1)I
                        }\zerom
                    ]
                = \mqty[A^{1/p}\\ \zerom ],\label{eq:pp03}
        \end{align}
        and substituting $X=A^{1/p}+E_X$ and $H= \zerom +E_H$ into Eq. \eqref{eq:ItrFun},
        \begin{align}
            G\qty(\mqty[A^{1/p}+E_X\\ \zerom +E_H])
            & = \mqty[
                A^{1/p}+E_X+E_H\\
                -\frac{1}{p}\qty{
                    \qty\Big[-(p-1)F_{\Delta}+pI]\qty\Big[\sum_{i=0}^{p-2}F_{\Delta}^i]
                    -(p-1)I
                    }E_H
                ]\\
            &\hspace{1em} \quad \qty(F_{\Delta}=(A^{1/p}+E_X)(A^{1/p}+E_X+E_H)^{-1})\\[0.8em]
            & = \mqty[
                A^{1/p}+E_X+E_H\\
                -\frac{1}{p}\qty[
                    I + F_{\Delta} + F_{\Delta}^2 +
                    \dots + F_{\Delta}^{p-2} - (p-1)F_{\Delta}^{p-1}
                    ]E_H
                ]. \label{eq:pp02}
        \end{align}
        Since $\norm{E_X}$ and $\norm{E_H}$ are sufficiently small, $F_{\Delta}$ becomes

        \begin{align}
            F_{\Delta} & = (A^{1/p}+E_X)(A^{1/p}+E_X+E_H)^{-1}\\
            & = \qty[A^{1/p}+E_X]\qty[A^{-1/p}-A^{-1/p}(E_X+E_H)A^{-1/p}+\Order (\norm{E_X+E_H}^2)]\\
            & = I - E_HA^{-1/p} + \Order (\norm{E_X}^2) + \Order (\norm{E_H}^2) + \Order (\norm{E_X}\norm{E_H}). \label{eq:pp01}
        \end{align}
        Using Eq. \eqref{eq:pp01}, $F_{\Delta}^i$ becomes
        \begin{align}
            F_{\Delta}^i & = \qty(I - E_HA^{-1/p} + \Order (\norm{E_X}^2) + \Order (\norm{E_H}^2) + \Order (\norm{E_X}\norm{E_H}))^i\\
            & = I - iE_HA^{-1/p} + \Order (\norm{E_X}^2) + \Order (\norm{E_H}^2) + \Order (\norm{E_X}\norm{E_H}).
        \end{align}
        Therefore, the lower part of \eqref{eq:pp02} can be rewritten as
        \begin{align}\textstyle
            & -\frac{1}{p}\qty[I + F_{\Delta} + F_{\Delta}^2 +
                \dots + F_{\Delta}^{p-2} - (p-1)F_{\Delta}^{p-1}]E_H\\
            &\qquad = -\frac{1}{p}\Bigl[I + (I - E_HA^{-1/p}) + (I-2E_HA^{-1/p})+\dots + (I-(p-2)E_HA^{-1/p})\\
            &\qquad \qquad \qquad - (p-1)(I-(p-1)E_HA^{-1/p})
                + \Order (\norm{E_X}^2) + \Order (\norm{E_H}^2) + \Order (\norm{E_X}\norm{E_H})\Bigr]E_H\\
            &\qquad =  -\frac{1}{p}\qty[\frac{p(p-1)}{2}E_HA^{-1/p} + \Order (\norm{E_X}^2) + \Order (\norm{E_H}^2) + \Order (\norm{E_X}\norm{E_H})]E_H\\
            &\qquad = \Order (\norm{E_X}^2)+\Order (\norm{E_H}^2),
        \end{align}
        and we have
        \begin{align}
            & G\qty(\mqty[A^{1/p}+E_X\\ \zerom +E_H])
            = \mqty[
                A^{1/p}+E_X+E_H\\
                \Order (\norm{E_X}^2)+\Order (\norm{E_H}^2)
                ].\label{eq:pp04}
        \end{align}
        From Eq. \eqref{eq:pp03} and Eq.\eqref{eq:pp04}, it holds that
        \begin{align}
            G\qty(\mqty[A^{1/p}+E_X\\ \zerom +E_H]) - G\qty(\mqty[A^{1/p}\\ \zerom ]) -
            \mqty[I&I\\ \zerom &\zerom ]\mqty[E_X\\E_H]
            =\mqty[\zerom \\ \Order (\norm{E_X}^2)+\Order (\norm{E_H}^2)]
            = o\qty(\norm{\mqty[E_X\\E_H]}),
        \end{align}
        and we obtain
        \begin{align}
            L_G\qty(\mqty[A^{1/p}\\ \zerom ],\mqty[E_X\\E_H])
            = \mqty[I&I\\ \zerom &\zerom ]\mqty[E_X\\E_H].
        \end{align}
        The matrix $\smqty[I&I\\\zerom&\zerom]$ is idempotent because
        $$\mqty[I&I\\\zerom&\zerom]^2 = \mqty[I&I\\\zerom&\zerom].$$
        Then, for all $i>0$, $\norm{L_G^i\qty(\smqty[A^{1/p}\\ \zerom ])}$ is bounded.
        From the above,
        the variant \eqref{eq:ItrProp} is stable.\footnote{
            The stability of IN iteration \eqref{eq:ItrIN} can be proved in a similar manner.
        }
\end{proof}

        \color{black}

        In the next subsection, we provide a means of reducing matrix multiplications of $P_{p-2}(F_k)$.

    \subsection{Decomposition of the polynomial.}\label{sub:3.2}
        If $d\ge 3$, the matrix polynomial $P_d(X)$ can be rewritten in a more efficient form:
        \begin{align}\label{def:MapCein}
            & P_d(X)= \begin{cases}
            P_{\frac{d-1}{2}}(X^2) \cdot (X+I) & (d\mathrm{\ is\ odd}) \\
            P_{\frac{d-2}{2}}(X^2) \cdot (X^2 + X) + I & (d\mathrm{\ is\ even}).\\
            \end{cases}
        \end{align}

        On the right-hand side of Eq. \eqref{def:MapCein}, there is a new matrix polynomial
        whose variable is $X^2$ and degree is approximately half of $d$.
        This decomposition reduces the number of matrix multiplications by almost a factor of two.
        Thus, the number of matrix multiplications of $P_d(X)$ is reduced by applying the decomposition \eqref{def:MapCein} to $P_d(X)$ repeatedly.

        Let us show the example of $d=57$.\footnote{
            The polynomial $P_{57}(F_k)$ appears when calculating the matrix $59$th root.
            }
        \begin{align}
            P_{57}(X) & =I+X+X^2+\dots+X^{57}\label{eq:ceinExampleStt}\\
            & = \qty\Big{P_{28}(X^2)}\qty\Big{X+I}\\
            & = \qty\Big{
                P_{13}(X^4)(X^4+X^2)+I
                }\qty\Big{X+I}\\
            & = \qty\Big{
                P_{6}(X^8)(X^4+I)(X^4+X^2)+I
                }\qty\Big{X+I}\\
            & \qquad \vdots \qquad\\
            & = \qty\Big{
                \qty[
                    (X^{32}+X^{16}+I)(X^{16}+X^8)+I
                    ]\qty[X^4+I]\qty[X^4+X^2]+I
                }\qty\Big{X+I}.\label{eq:ceinExampleFin}
        \end{align}
        In this example, $P_{57}(X)$ of Eq. \eqref{eq:ceinExampleStt} is computed using 56 matrix multiplications by naive implementation.
        On the other hand, after applying the decomposition \eqref{def:MapCein} to Eq. \eqref{eq:ceinExampleStt} four times, Eq. \eqref{eq:ceinExampleFin} can be computed with nine matrix multiplications.
        In detail, five matrix multiplications are required for
        constructing five intermediate matrices, $X^2,X^4,X^8,X^{16}$, and $X^{32}$, and another four matrix multiplications are required for multiplication of the subpolynomials.

        Finally, we combine variant \eqref{eq:ItrProp} with decomposition \eqref{def:MapCein} into Algorithm \ref{alg:Algorithm1} for practice.
        \begin{algorithm}[htpb]
            \caption{
                Newton's method with the variant of IN iteration
                }\label{alg:Algorithm1}
            \begin{algorithmic}[1]
                \In $A\bbCnn$ (Satisfying condition \eqref{eq:condition1} in section 1), $p\bbN$
                \Out $X \approx \proot{A}{p}$
                \stt Decompose $P_{p-2}$ by applying the decomposition \eqref{def:MapCein} repeatedly.
                \stt $X_0 \leftarrow I$
                    ($\because$ Condition \eqref{eq:condition2})$,\ H_0 \leftarrow \frac{A-I}{p}$
                \For{$k = 0,1,2\dots$ until convergence}
                    \stt $X_{k+1} = X_k + H_k$
                    \stt $F_k = X_kX_{k+1}^{-1}$
                    \stt Compute $P_{p-2}(F_k)$
                    \stt $
                        H_{k+1} = -\frac{1}{p} \qty{
                            \qty[-(p-1)F_k+pI]P_{p-2}(F_k)-(p-1)I
                            }H_k$
                \EndFor
                \stt $X \leftarrow X_k$
            \end{algorithmic}
        \end{algorithm}

    \subsection{Estimation of the computational cost of the variant}
        We calculated the computational cost of the variant \eqref{eq:ItrProp} for $p\in[5,100]$ numerically and found that cost to be consistent with $(2\floor{2\log_2(p-1)}+8/3)n^3$.
        Here, the computational cost $8/3 n^3$ results from the computation of $F_k(=X_kX_{k+1}^{-1})$ by using the LU decomposition of $X_{k+1}$.
        While a proof that the cost of variant \eqref{eq:ItrProp} is $\Order (n^3 \log p)$ flops per iteration is left for future work,
        this numerical result agrees with that expectation.
        In addition, we calculated the costs of IN iteration \eqref{eq:ItrIN} and the iteration \eqref{eq:Itr3.9} for $p\in[5,100]$ to compare them with that of variant \eqref{eq:ItrProp}. The result is shown in Fig. \ref{fig:CalcCost}.
        \begin{figure}[htbp]\centering
            \includegraphics{./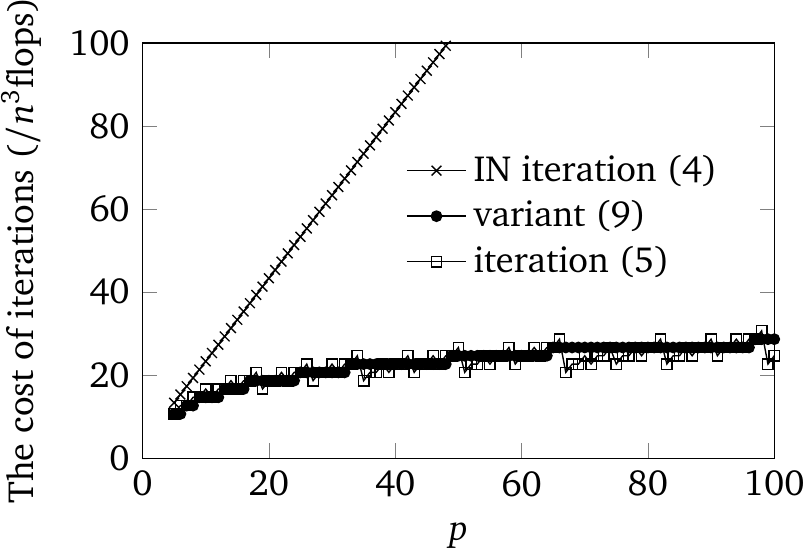}
            \caption{
                The computational costs per iteration for the three iterations
                }\label{fig:CalcCost}
        \end{figure}
        It is clear from the figure that the computational cost of the variant \eqref{eq:ItrProp} is lower than that of IN iteration \eqref{eq:ItrIN} and competitive with that of iteration \eqref{eq:Itr3.9}.
        For example, when $d=59$, the computational cost of variant \eqref{eq:ItrProp} is
        approximately a quarter of that of IN iteration \eqref{eq:ItrIN} and slightly higher than that of iteration \eqref{eq:Itr3.9}.

    \section{Numerical experiment}
        This section describes a numerical experiment in which
        the principal $59$th roots of test matrices are calculated.
        The test matrices are described in Table \ref{tab:numExpTestMat}.
        \begin{table}[htbp] \centering \caption{Test matrices.}
            \begin{tabular}{lrrrl}
                \toprule
                Test matrix $A$ (Matrix ID) & Size  & Non-zero elements & cond($A$)   & Symmetry Property \\
                \midrule
                msc01440\cite{davis2011university} (1) & 1440  & 44998 & $3.3 \times 10^{6\ \ }$ & Symmetric positive define \\
                Random matrix (2) & 1500  & 2250000 & $3.8 \times 10^{2\ ~}$ & Symmetric positive define \\
                NNC1374\cite{boisvert1997matrix} (3) & 1374  & 8606  & $3.7 \times 10^{14}$ & Unsymmetric \\
                \bottomrule
            \end{tabular} \label{tab:numExpTestMat}
        \end{table}
        First, we preconditioned the test matrices
        to satisfy the sufficient condition \eqref{eq:condition1} of global convergence in section 1:
        all eigenvalues of $A$ lie in the set $\qty{z\bbC:\Re z >0,|z|\le 1}$.
        Thus, we computed $\tilde{A} = A^{1/2}/\|A^{1/2}\|_F$.
        Then, we computed $\tilde{A}^{1/59}$ by IN iteration \eqref{eq:ItrIN}, variant \eqref{eq:ItrProp} of Algorithm \ref{alg:Algorithm1}, and iteration \eqref{eq:Itr3.9}.
        The computational costs of these three iterations are shown in Table \ref{tab:numExpCos}.
        \begin{table}[htbp] \centering \caption{Computational costs for computing the principal 59th root.}
            \begin{tabular}{lr}
                \toprule
                Iteration & Computational costs per iteration(flops) \\
                \midrule
                IN iteration \eqref{eq:ItrIN} & $(118+10/3)n^3+\Order (n^2)$\\
                variant \eqref{eq:ItrProp} &  $(22+8/3)n^3+\Order (n^2)$\\
                iteration \eqref{eq:Itr3.9} & $(20+8/3)n^3+\Order (n^2)$ \\
                \bottomrule
            \end{tabular} \label{tab:numExpCos}
        \end{table}
        For this experiment, Python 3.5 was used for programming,
        and Intel$^{\mathrm{(R)}}$ Core$^{\mathrm{TM}}$ i7 2.8GHz CPU and 8GB RAM were used for computation.

        First, Figure \ref{fig:numexp1} shows the ratios of computation time of these three iterations. From Fig. \ref{fig:numexp1}, the computation time of variant \eqref{eq:ItrProp} is approximately one fourth of that of IN iteration \eqref{eq:ItrIN} and slightly longer than that of \eqref{eq:Itr3.9} in all cases.
        Here it can be seen that both the computation time and the computational cost decreased.
        \pgfplotsset{width=7cm,height=5.8cm}
        \begin{figure}[htbp] \centering
            \includegraphics{./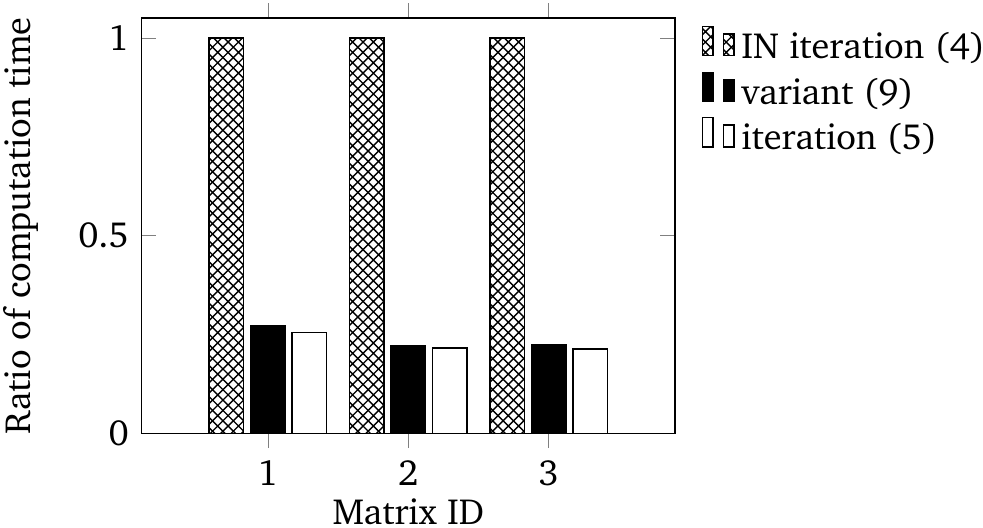}
            \caption{
                Time comparison of the three iterations
                }\label{fig:numexp1}
        \end{figure}
        Next, Figure \ref{fig:numexp2} shows the relative residual defined as $R(X)=\norm{X^p-A}_F/\norm{A}_F$ for these three iterations.
        The figure shows that the convergence behavior of variant \eqref{eq:ItrProp} differs little from that of IN iteration \eqref{eq:ItrIN} and iteration \eqref{eq:Itr3.9}.
        Since there is some possibility of numerical cancellation of variant \eqref{eq:ItrProp}, IN iteration \eqref{eq:ItrIN} is slightly better than variant \eqref{eq:ItrProp} in terms of accuracy.
        \begin{figure}[htbp]\centering
            \includegraphics{./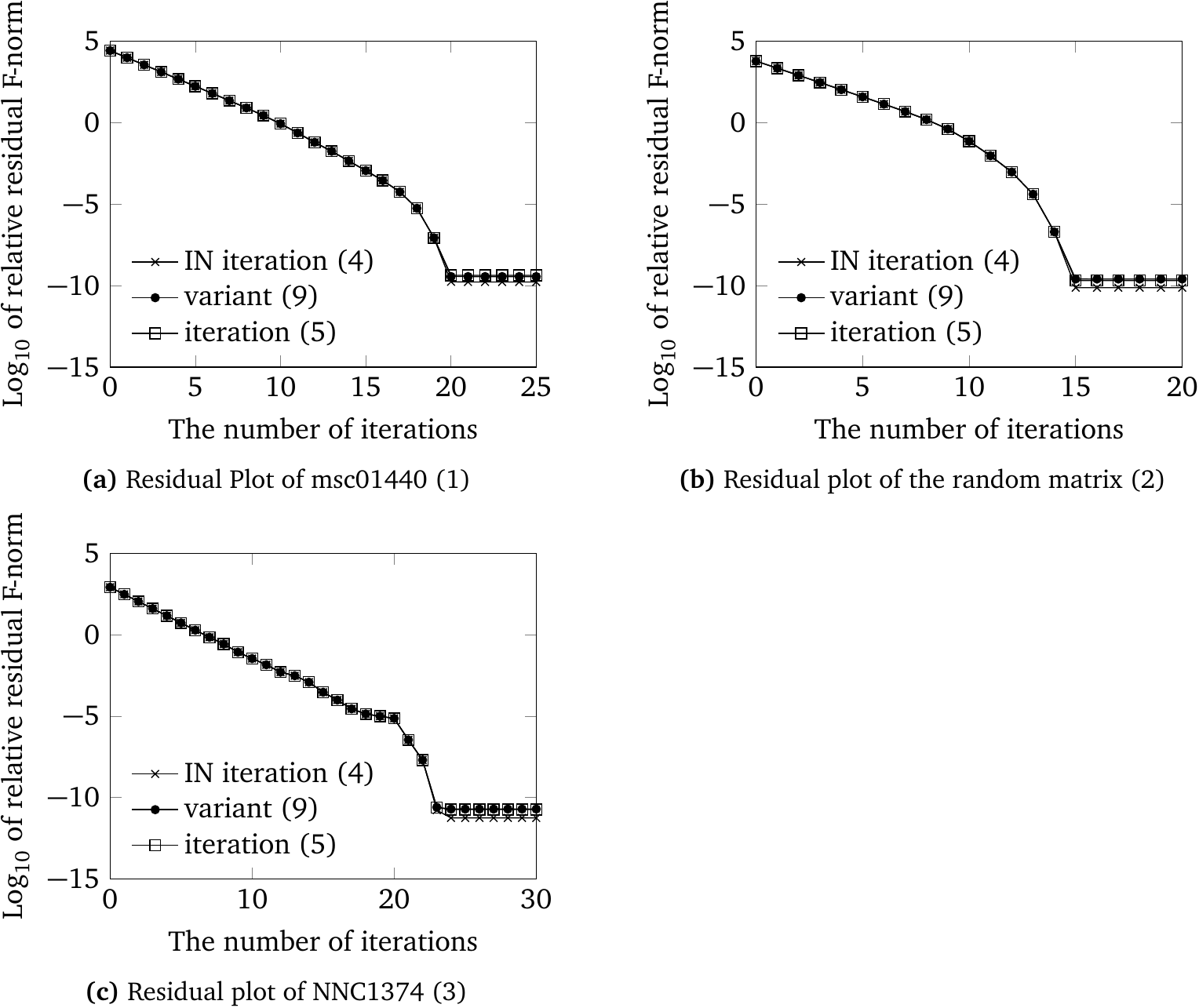}
            \caption{
                Residual comparison of the three iterations
                }\label{fig:numexp2}
        \end{figure}

    \section{Conclusion and future work}
        In this paper, a variant of IN iteration is proposed
        whose computational cost well agreed with $\Order (n^3\log p)$ flops per iteration if $p$ is up to at least 100,
        and whose increment part still has the form $H_{k+1}=f_k(H_k)$.
        We have learned from the results of the numerical experiment that
        the variant is competitive with iteration \eqref{eq:Itr3.9} in terms of accuracy and computation time.
        The proposed variant therefore becomes a choice for practical application.

        The most important future work is to prove that the computational cost of the variant is $\Order (n^3\log p)$. Other future work includes reducing the computation time of Newton's method for the principal matrix $p$th root by reducing the number of iterations.
        However, it is not clear how to choose a better initial guess than the conventional initial guess $I$ (the identity matrix).
        It might be easier to find a good initial guess, when considering the damped Newton method.

    \section*{Acknowledgment}
        The authors are grateful to the reviewer for the careful reading and the comments that substantially enhanced the quality of the manuscript.
        This work has been supported in part by JSPS KAKENHI (Grant No. 26286088).\\
        \\

    \color{black}
    \section*{Appendix}

        In this section,
        we recall some definitions and notations which were given in \cite{higham2008functions},
        where we consider the matrix norm is consistent.

\begin{thebibliography}{10}

\bibitem{bini2005algorithms}
{\sc D.~A. Bini, N.~J. Higham, and B.~Meini}, {\em Algorithms for the matrix
  pth root}, Numer. Algorithms, 39 (2005), pp.~349--378.

\bibitem{boisvert1997matrix}
{\sc R.~F. Boisvert, R.~Pozo, K.~Remington, R.~F. Barrett, and J.~J. Dongarra},
  {\em Matrix Market: a web resource for test matrix collections}, Springer US,
  Boston, MA, 1997, pp.~125--137;
\newblock also available online from http://math.nist.gov/MatrixMarket/.

\bibitem{clark2007accelerating}
{\sc M.~A. Clark and A.~D. Kennedy}, {\em Accelerating dynamical-fermion
  computations using the rational hybrid {M}onte {C}arlo algorithm with
  multiple pseudofermion fields}, Phys. Rev. Lett., 98 (2007),
  pp.~(051601)1--4.

\bibitem{davis2011university}
{\sc T.~A. Davis and Y.~Hu}, {\em The {U}niversity of {F}lorida sparse matrix
  collection}, ACM Trans. Math. Softw., 38 (2011), pp.~1:1--1:25;
\newblock also available online from
  http://www.cise.ufl.edu/research/sparse/matrices.

\bibitem{guo2006schur}
{\sc C.-H. Guo and N.~J. Higham}, {\em A {S}chur-{N}ewton method for the matrix
  $p$th root and its inverse}, SIAM J. Matrix Anal. Appl., 28 (2006),
  pp.~788--804.

\bibitem{higham2008functions}
{\sc N.~J. Higham}, {\em Functions of Matrices: Theory and Computation}, SIAM,
  Philadelphia, 2008.

\bibitem{iannazzo2006newton}
{\sc B.~Iannazzo}, {\em On the {N}ewton method for the matrix $p$th root}, SIAM
  J. Matrix Anal. Appl., 28 (2006), pp.~503--523.

\bibitem{iannazzo2008family}
\leavevmode\vrule height 2pt depth -1.6pt width 23pt, {\em A family of rational
  iterations and its application to the computation of the matrix $p$th root},
  SIAM J. Matrix Anal. Appl., 30 (2008), pp.~1445--1462.

\bibitem{sadeghi2011computing}
{\sc A.~Sadeghi, A.~I.~M. Ismail, and A.~Ahmad}, {\em Computing the pth roots
  of a matrix with repeated eigenvalues}, Appl. Math. Sci., 5 (2011),
  pp.~2645--2661.

\bibitem{smith2003schur}
{\sc M.~I. Smith}, {\em A {S}chur algorithm for computing matrix $p$th roots},
  SIAM J. Matrix Anal. Appl., 24 (2003), pp.~971--989.

\end{thebibliography}
\begin{enumerate}

    \item The notation $X=\Order (\norm{E})$ denotes that $\norm{X}\le c\norm{E}$ for some constant $c$ for all sufficiently small $\norm{E}$, while $X=o(\norm{E})$ means that $\norm{X}/\norm{E}\to 0$ as $E\to \zerom$ \cite[p. 321]{higham2008functions}.

    \item The Fr\'{e}chet derivative of a matrix function $f:\bCnn\to\bCnn$ at a point $X\bbCnn$ is a linear mapping
    \begin{align}
        \bCnn &\overset{L}{\to} \bCnn\\
        E &\mapsto L(X,E)
    \end{align}
    such that for all $E\bbCnn$
    \begin{align}
        f(X+E)-f(X)-L(X,E)= o(\norm{E}).
    \end{align}
    If we need to show the dependence on $f$ we will write $L_f(X,E)$.
    When we want to refer to the mapping at $X$ and not its value in a particular direction we will write $L(X)$ \cite[p. 56]{higham2008functions}.

    \item The norm of $L(X)$ is defined by$ \|L(X)\| := \underset{Z\ne \zerom}{\max}\frac{\|L(X,Z)\|}{\|Z\|}$ \cite[p. 56]{higham2008functions}.

    \item We write $L^i(X)$ to denote the $i$th power of the Fr\'{e}chet derivative $L$ at $X$,
    defined as $i$-fold composition; thus $L^3(X,E)\equiv L\bigl(X,L(X,L(X,E))\bigr)$ \cite[p. 97]{higham2008functions}.
\end{enumerate}

\end{document}